\documentclass{article}
\usepackage{graphicx} 

\usepackage{setspace}

\usepackage{amsmath}
\usepackage{amssymb}
\usepackage{amsthm}
\usepackage{mathtools}
\usepackage{empheq}
\usepackage{enumitem}
\usepackage{titlesec}
\usepackage{graphicx}
\usepackage{caption}
\usepackage{subcaption}
\usepackage{diagbox}
\usepackage{hyperref}
\usepackage{xcolor}

\usepackage[T1]{fontenc}
\usepackage[utf8]{inputenc}
\usepackage{comment}
\usepackage{indentfirst}
\usepackage[top=1.25in,left=1.25in,right=1.25in]{geometry}

\numberwithin{equation}{section}

\title{\Large{\uppercase{\bf A note on generating polyhedra and quadrangulations}}}
\author{\Large{Luisa Andreis, Riccardo W. Maffucci, and Federico Polito}}
\date{}
\newcommand{\Addresses}{  
		L.~Andreis, \textsc{Dipartimento di Matematica, Universit\`a di Torino\\\indent Via Carlo Alberto 10, Turin 10123, Italy}\par\nopagebreak\vspace{-0.35cm}
		\textit{E-mail address}, L.~Andreis: \href{mailto:luisa.andreis@unito.it}{\texttt{luisa.andreis@unito.it}}
		
		R.W.~Maffucci, \textsc{Dipartimento di Matematica, Universit\`a di Torino\\\indent Via Carlo Alberto 10, Turin 10123, Italy}\par\nopagebreak\vspace{-0.35cm}
		\textit{E-mail address}, R.W.~Maffucci: \href{mailto:riccardowm@hotmail.com}{\texttt{riccardowm@hotmail.com}}
		
		F.~Polito, \textsc{Dipartimento di Matematica, Universit\`a di Torino\\\indent Via Carlo Alberto 10, Turin 10123, Italy}\par\nopagebreak\vspace{-0.35cm}
		\textit{E-mail address}, F.~Polito: \href{mailto:federico.polito@unito.it}{\texttt{federico.polito@unito.it}}
  }

\def\cp{\mathcal{P}}
\def\cq{\mathcal{Q}}
\def\calr{\mathcal{R}}
\def\ct{\mathcal{T}}

\newtheorem{thm}{Theorem}[section]

\newtheorem{prop}[thm]{Proposition}

\begin{document}
\titleformat{\section}
  {\Large\scshape}{\thesection}{1em}{}
\titleformat{\subsection}
  {\large\scshape}{\thesubsection}{1em}{}
\maketitle
\Addresses

\begin{abstract}
A polyhedron is a planar, $3$-connected graph. We iteratively construct all polyhedra (save for pyramids) from a unique starting graph, namely the square pyramid, via two graph transformations. This builds upon a previous construction, that starts from the full class of pyramids, and applies the same transformations.

In a related result, we iteratively construct all quadrangulations of the sphere where all $4$-cycles are facial, i.e., the class of radial graphs of the polyhedra (save for antibipyramids), from a unique starting graph, namely the square antibipyramid, via a unique graph transformation. This builds upon a previous construction, that starts from the full class of antibipyramids, and applies the same transformation.
\end{abstract}
{\bf Keywords:} Polyhedron, Quadrangulation, Iterative, Planar graph, Graph transformation, Graph algorithm, Degree sequence, Unigraphic.
\\
{\bf MSC(2020):} 05C10, 05C76, 05c85 52B05, 52B10.


\section{Introduction}
\label{sec:mainres}

In this paper, we deal with finite, undirected graphs with no loops or multiple edges. Let $S$ be a (possibly infinite) family of graphs, and $\ct$ a finite set of graph transformations. We write
\[\{S;\ct\}\]
for the class of graphs that may be generated from the graphs in $S$ by applying iteratively the transformations in $\ct$. It is commonly assumed that $S \subset \{S;\ct\}$.

The polyhedral graphs, or simply polyhedra, are defined as the wireframes ($1$-skeleta) of polyhedral solids. This is exactly the class of planar, $3$-connected graphs \cite{radste}. A graph is planar if it may be embedded in the plane (equivalently, the sphere) such that edges do not cross, except at vertices. A graph on more than $k$ vertices is $k$-connected if however one removes fewer than $k$ vertices, the graph stays connected. Clearly, by $3$-connectivity, every vertex of every polyhedron has degree at least $3$. Every polyhedron $G$ has a unique embedding in the sphere \cite{whit32}. As a consequence, its plane dual $G^*$ is unique. Moreover, $G^*$ is a polyhedron (corresponding naturally to the wireframe of the dual polyhedral solid). For an account of other relevant properties of this intriguing class of graphs, we refer the interested reader to \cite[Introduction]{maffucci2025deza}.

The $3$-connected quadrangulations of the sphere are polyhedra where every face (i.e., region) is delimited by a $4$-cycle. This is the dual class of the $4$-regular polyhedra. The smallest example of a $3$-connected quadrangulation is the cube.

We will denote by
$\cq$ the class of $3$-connected quadrangulations of the sphere where all $4$-cycles are facial (i.e., where there are no separating $4$-cycles). This sub-class of quadrangulations is extremely relevant to the study of polyhedra, as we shall now describe. Given a polyhedron $G$, we may define its radial graph $\calr(G)$ by
\[V(\calr)=V(G)\cup V(G^*)\]
and
\[E(\calr)=\{vf : v\in V(G) \text{ lies on the face corresponding to }f\in V(G^*)\}.\]
The class $\cq$ is exactly the class of radial graphs of polyhedra \cite{arcric,brin05}. That is to say, one has $R\in\cq$ if and only if there exists a pair of dual polyhedra $(G,G^*)$ such that $R\simeq\calr(G)\simeq\calr(G^*)$. The pair $(G,G^*)$ is unique, up to swapping the order.

We will denote by
\[A_n,\qquad \text{ even }n\geq 6,\]
the antibipyramid with $n$ faces (Figure \ref{fig:a}), dual of the $n/2$-gonal antiprism. This terminology comes from the fact that the bipyramids are the duals of the prisms, while the $A_n$'s are the duals of the antiprisms. One has $A_n\in\cq$ for every even $n\geq 6$. The graph $A_6$ is simply the cube. Other terms for $A_n$ that are found in the literature include `deltahedron', `trapezohedron', and `pseudo-double-wheel' \cite{brin05}. Note that, for every $n\geq 3$, one has
\[A_{2n}=\calr(W_n),\]
where $W_n$ is the $n$-gonal pyramid (i.e., wheel graph). For instance, the cube $A_6$ is the radial graph of the tetrahedron $W_3$. Another way to construct $A_n$ is to start from a cycle $C: a_1,a_2,\dots,a_n$, add a vertex $x$ and edges between $x$ and every element of odd index along $C$, and add a vertex $z$ and edges between $z$ and every element of even index along $C$.
\begin{figure}[ht]
\centering
\includegraphics[width=3cm]{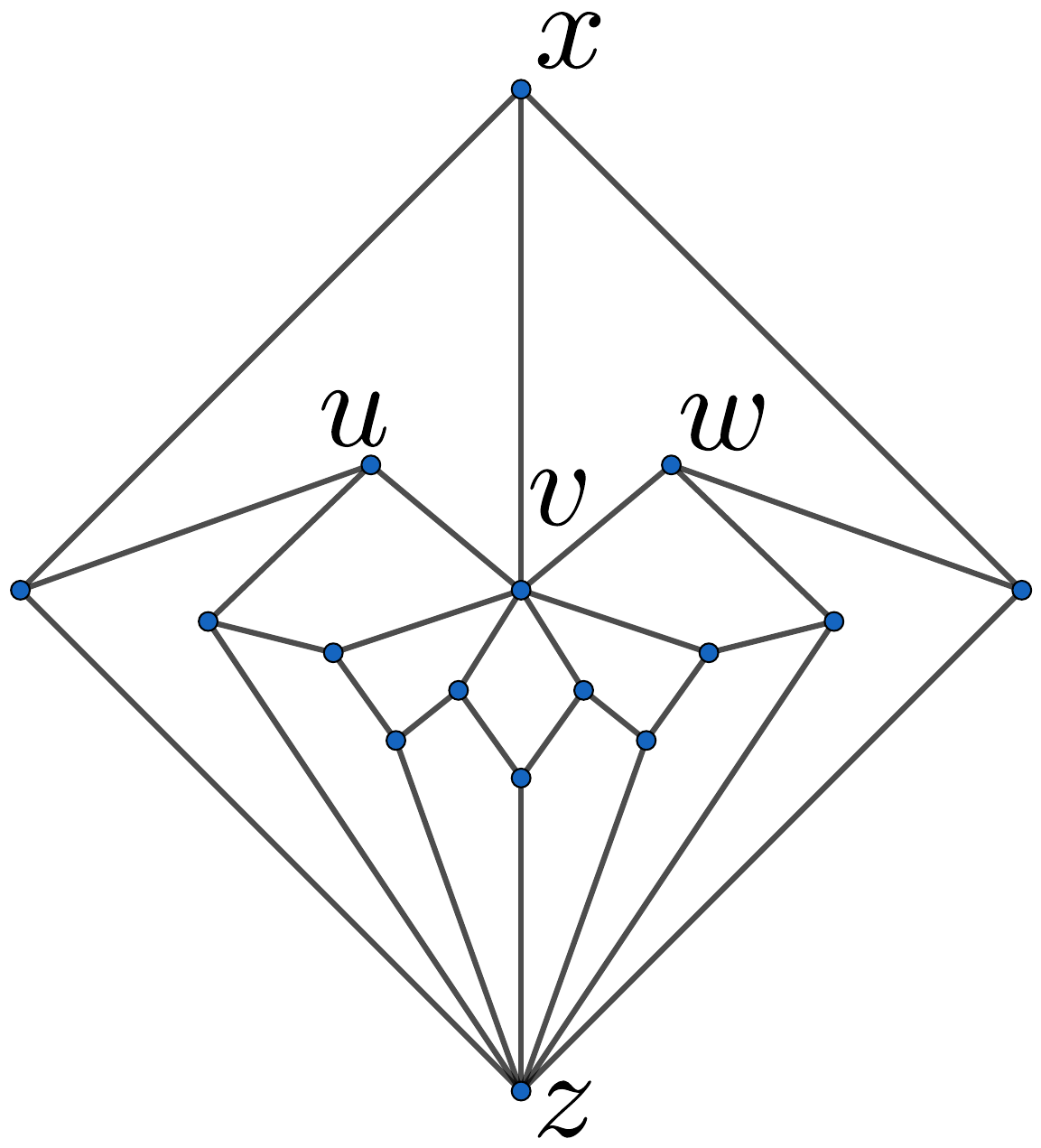}
\caption{$A_{14}$.}
\label{fig:a}
\end{figure}

The focus of this paper is the iterative construction of polyhedral graphs, and the related iterative construction of $\cq$. In 1961, Tutte established an iterative construction of all $3$-connected graphs starting from the pyramids, and using only the two transformations of edge addition and vertex expansion \cite{tutt61}. To expand a vertex $x$ in a polyhedron, we write its neighbours in cyclic order around $x$ (recalling that the plane embedding is unique for polyhedra),
\[a_1,a_2,\dots,a_m,b_1,b_2,\dots,b_n,\qquad m,n\geq 2,\]
delete the edges $xb_1,xb_2,\dots,xb_n$, and insert the new vertex $y$ and new edges $yx,yb_1,yb_2,\dots,yb_n$ (Figure \ref{fig:ex}). Clearly, vertices of degree $3$ may not be expanded. The reverse transformation to vertex expansion is called vertex contraction. As a neat consequence, Tutte established an iterative construction of all polyhedra starting from the pyramids, and using only the two transformations of edge addition and taking the dual polyhedron \cite[Theorem 6.1]{tutt61}. This is because expanding a vertex of a polyhedron is equivalent to adding an edge to the dual, and vice versa.
\begin{figure}[ht]
	\centering
	\begin{subfigure}{0.32\textwidth}
		\centering
		\includegraphics[width=2.5cm]{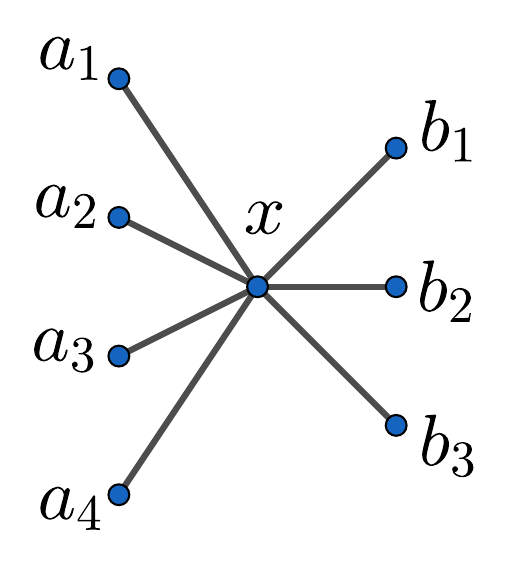}
		\caption{Before the expansion.}
		\label{fig:exa}
	\end{subfigure}
    $\rightarrow$
	\begin{subfigure}{0.32\textwidth}
		\centering
		\includegraphics[width=2.5cm]{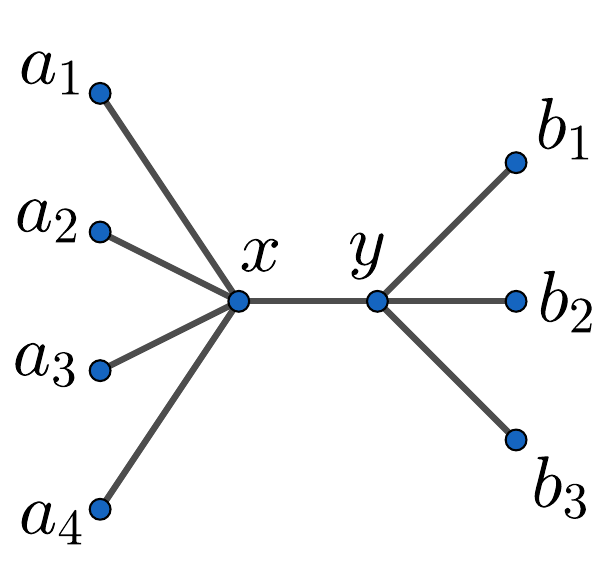}
		\caption{After the expansion.}
		\label{fig:exb}
	\end{subfigure}
    \caption{Expanding the vertex $x$.}
	\label{fig:ex}
\end{figure}

In 2005, an iterative construction of all quadrangulations of the sphere was found, as well as of a few subclasses, such as the $3$-connected quadrangulations, and $\cq$ \cite{brin05}. More precisely, it was shown in \cite[Theorem 4]{brin05} that

\[\cq=\{\{A_{n}, \text{ even }n\geq 6\};\{\cp_1\}\},\]

where $\cp_1$ is the graph transformation shown in Figure \ref{fig:p1}. For $\cp_1$ to be applicable, $v$ must be of degree at least $4$, and $u,x,w$ must be consecutive neighbours of $v$ in the cyclic order. We will also call the transformation $\cp_1$ a $\cp_1$-expansion, and the reverse operation a $\cp_1$-contraction. The $\cp_1$-expansion is not to be confused with the vertex expansion shown in Figure \ref{fig:ex}. The notation $\cp_1$ was chosen to be consistent with \cite{brin05}. As noted in \cite{brin05}, this procedure constitutes an alternative construction for all (pairs of dual) polyhedra, via the generation of their radial graphs.
\begin{figure}[ht]
	\centering
	\begin{subfigure}{0.32\textwidth}
		\centering
        \includegraphics[width=1.25cm]{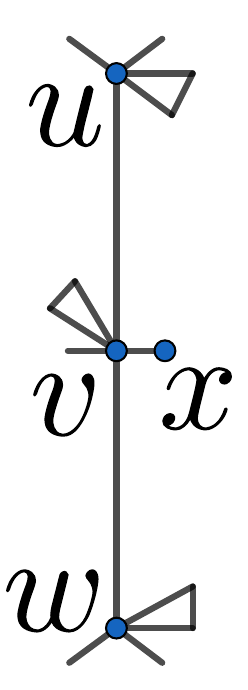}
		\caption{Before the $\cp_1$-expansion.}
		\label{fig:p1a}
	\end{subfigure}
    $\rightarrow$
	\begin{subfigure}{0.32\textwidth}
		\centering
		\includegraphics[width=3.cm]{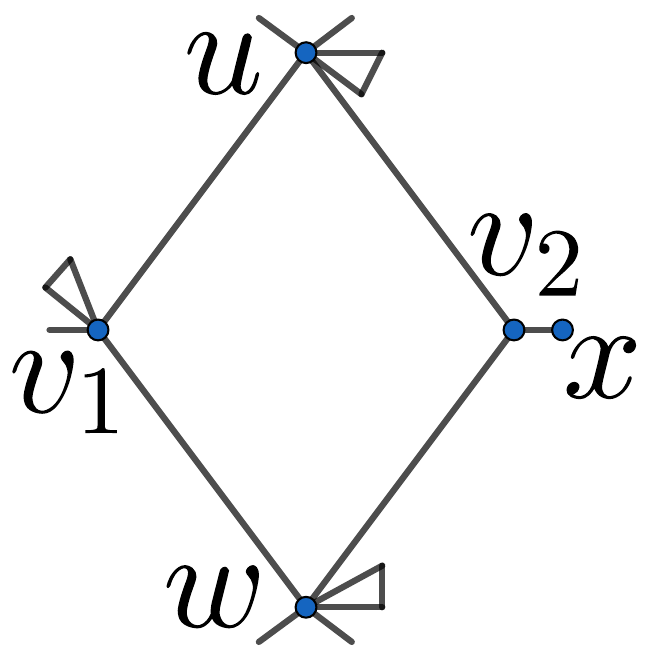}
		\caption{After the $\cp_1$-expansion.}
		\label{fig:p1b}
	\end{subfigure}
    \caption{Transformation $\cp_1$ applied to $(G,\{u,v,w,x\})$. Triangles indicate that possibly there are other edges.}
	\label{fig:p1}
\end{figure}

In our investigation, we have found a strengthening of the two mentioned results \cite[Theorem 4]{brin05} and \cite[Theorem 6.1]{tutt61}.

\begin{thm}
	\label{thm:1}
All quadrangulations of the sphere where every $4$-cycle is facial (save the antibipyramids $A_n, n \neq 8$) are generated from the antibipyramid $A_8$ via $\cp_1$-expansions.
\end{thm}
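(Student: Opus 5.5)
The plan is to reduce Theorem \ref{thm:1} to the result \cite[Theorem 4]{brin05} quoted above, $\cq=\{\{A_{n}, \text{ even }n\geq 6\};\{\cp_1\}\}$. Take $R\in\cq$ that is not an antibipyramid. By that result there is a chain
\[A_n=R_0\to R_1\to\dots\to R_t=R,\qquad t\geq 1,\]
of $\cp_1$-expansions with every $R_i\in\cq$.

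Since every vertex of the cube $A_6$ has degree $3$, $\cp_1$ cannot be applied to it. Hence $n\geq 8$. If $n=8$ we are done. So it suffices to prove the following claim: for every even $n\geq 10$, every $\cp_1$-expansion $R_1$ of $A_n$ lies in $\{\{A_8\};\{\cp_1\}\}$. Once $R_1$ is generated from $A_8$, the rest of the chain to $R$ is appended unchanged.

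To prove the claim I would work on the polyhedral side, using $A_{2k}=\calr(W_k)$ and the correspondence from \cite{brin05}. A $\cp_1$-expansion of a radial graph $\calr(G)$ is the same as either adding an edge to $G$ or expanding a vertex of $G$; dually, it is adding an edge to $G^*$. Since $W_k$ is self-dual, it suffices to look at edge additions to $W_k$ that preserve $3$-connectivity. The hub is already adjacent to every other vertex, so the only such additions insert a chord $a_ia_j$ between two rim vertices at rim distance at least $2$. I will verify this identification of the $\cp_1$-expansions of $A_{2k}$ directly from Figure \ref{fig:p1}. The degree-$k$ vertices $x,z$ of $A_{2k}$ play the role of the hub and its dual face, and the degree-$4$ vertices are rim vertices and triangular faces. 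The outcome should be that, up to symmetry, $R_1\simeq\calr(W_k+a_ia_j)$.

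Next I would build $W_k+a_ia_j$ from $W_4+a_1a_3$ by repeatedly expanding rim vertices of $W$. Equivalently, read in reverse, contract a rim edge $a_la_{l+1}$ of $W_k+a_ia_j$ lying strictly inside one of the two rim arcs cut off by the chord. The chord divides the $k$ rim edges into two arcs of lengths $p,q\geq 2$ with $p+q=k$. If $k\geq 5$, some arc has length at least $3$. Contracting an edge in that arc gives $W_{k-1}$ plus a chord whose arcs still have lengths at least $2$. This graph is again a planar $3$-connected graph of the same type. Iterating brings us to $W_4$ with a diagonal, whose radial graph is a single $\cp_1$-expansion of $A_8=\calr(W_4)$. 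Reading the sequence forwards, each step is a vertex expansion in the polyhedron, hence a $\cp_1$-expansion of the radial graph, and every intermediate graph lies in $\cq$ because it is the radial graph of a polyhedron.

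The main obstacle is the correspondence step. I need to check carefully that every $\cp_1$-expansion of $A_n$ really is of the form $\calr(W_k+\text{chord})$ or its dual, and that a vertex expansion of a polyhedron $G$ is indeed realised by one $\cp_1$-expansion of $\calr(G)$ (the faces at the expanded vertex split appropriately). I would do this by matching the labels $u,v,w,x$ of Figure \ref{fig:p1} with a vertex, its two faces and the new edge. I would also treat the edge cases ($p=2$ or $q=2$, and $k=5$) explicitly.
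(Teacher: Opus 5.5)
Your reduction is correct and is exactly the paper's: it suffices to show that each one-step $\cp_1$-expansion of $A_n$, $n\geq 10$, lies in $\{\{A_8\};\{\cp_1\}\}$. The gap is in the dictionary between $\cp_1$ and polyhedral operations, and the check you defer would fail. A $\cp_1$-expansion at $(v;u,x,w)$ adds one vertex joined to exactly $u,x,w$ and removes only the edge $vx$. In $G$ this is one of two things:
\begin{enumerate}
\item if $v\in V(G)$, a vertex expansion in which the new vertex has degree $3$;
\item if $v$ is a face of $G$, an edge joining two vertices at distance $2$ along that face, which cuts off a triangle.
\end{enumerate}
A general vertex expansion or edge addition is not a single $\cp_1$-expansion. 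In particular, up to isomorphism the only $\cp_1$-expansion of $A_{2k}$ is $\calr(W_k+a_1a_3)$, the paper's $B_{2k+1}$. Your larger family of chords is harmless for the inclusion you need; the problem is how you then generate it. (Also, the vertices of $A_{2k}$ other than the two of degree $k$ have degree $3$, not $4$.)

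The step that fails is the induction on $k$. The rim edge $a_la_{l+1}$ lies in the triangle $[h,a_l,a_{l+1}]$, so contracting it merges two spokes. Read forwards, the step splits a rim vertex into two adjacent vertices that \emph{both} stay joined to the hub $h$. This is not a vertex expansion, whose two halves share no neighbour. It also adds two edges to the polyhedron, whereas a $\cp_1$-expansion raises the number of faces of the radial graph, i.e.\ $|E(G)|$, by exactly one. So going from $\calr(W_{k-1}+\text{chord})$ to $\calr(W_k+\text{chord})$ needs two $\cp_1$-moves, through a graph that is not of the form $W_{k'}+\text{chord}$. Your plan neither identifies that graph nor checks that both moves are of the restricted type.

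The missing intermediate is $W_k+a_1a_3-ha_1$, with degree sequence $k-1,4,3^{k-1}$; its radial graph is the paper's $C_{2k}$. Two genuine $\cp_1$-moves lead to it and then past it:
\begin{enumerate}
\item In $W_{k-1}+a_1a_3$, expand the degree-$4$ chord endpoint $a_1$, separating its neighbours $\{a_2,a_3\}$ from $\{a_{k-1},h\}$. Both halves have degree $3$, and after relabelling you get $W_k+a_1a_3-ha_1$.
\item Add the spoke $ha_1$ inside the quadrilateral face $[h,a_k,a_1,a_2]$, which cuts off a triangle.
\end{enumerate}
Together these give the paper's chain $B_{n+1},C_n,B_{n-1},\dots,C_8\simeq A_8$. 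The paper verifies this chain directly on the quadrangulations, by locating the faces that admit a $\cp_1$-contraction. Deleting a spoke before contracting is also the move used in the paper's alternative proof of Theorem \ref{thm:2}.
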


Theorem \ref{thm:1} will be proven in Section \ref{sec:pf}. It constitutes a strengthening of \cite[Theorem 4]{brin05}. One may write
\[\cq\setminus\{A_n,\ \text{even } n\neq 8\}=\{\{A_8\};\{\cp_1\}\}.\]

\begin{thm}
	\label{thm:2}
All polyhedra (save the pyramids $W_n, n \neq 4$) are generated from the square pyramid $W_4$ by adding edges and expanding vertices.
\\
Equivalently, all polyhedra (save the pyramids $W_n, n \neq 4$) are generated from the square pyramid $W_4$ by adding edges and taking duals.
\end{thm}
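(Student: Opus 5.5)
The plan is to deduce Theorem \ref{thm:2} from Theorem \ref{thm:1} through the radial-graph correspondence $G\mapsto\calr(G)$ between pairs of dual polyhedra and the class $\cq$. Two facts make this work. First, $\calr(W_n)=A_{2n}$, and in particular $\calr(W_4)=A_8$. Second, by the bijection recalled in the introduction \cite{arcric,brin05}, $R\in\cq$ determines its dual pair $(G,G^*)$ uniquely up to order. Let $G$ be a polyhedron that is not a pyramid $W_n$ with $n\neq 4$. If $G=W_4$ there is nothing to prove. Otherwise $\calr(G)\in\cq$ is not an antibipyramid: if $\calr(G)\simeq A_{2n}=\calr(W_n)$, uniqueness of the pair would force $G\in\{W_n,W_n^*\}=\{W_n\}$. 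So Theorem \ref{thm:1} gives a sequence
\[A_8=R_0\to R_1\to\dots\to R_k\simeq\calr(G)\]
of $\cp_1$-expansions with every $R_i\in\cq$.

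The key step, which I expect to be the main technical point, is a local lemma: if $R=\calr(H)$ and $R'$ is obtained from $R$ by a $\cp_1$-expansion staying in $\cq$, then $R'=\calr(H')$ where $H'$ is obtained from $H$ either by adding one edge or by expanding one vertex. Equivalently, $H'^*$ is obtained from $H^*$ by adding an edge. This correspondence is essentially noted in \cite{brin05}, and I would verify it from Figure \ref{fig:p1}. Let $v$ be the central vertex, of degree at least $4$, with consecutive neighbours $u,x,w$. If $v\in V(H)$, then the neighbours of $v$ in $\calr$ alternate between faces of $H$ at $v$. The $\cp_1$-expansion splits the vertex $v$ into two vertices joined through new face-vertices. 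Reading off the new vertex-face incidences shows that this is exactly the expansion of $v$ in $H$ (Figure \ref{fig:ex}), where the split of the neighbourhood is determined by the position of $x$. Dually, it is the addition of an edge in $H^*$ across the face $v^*$. If instead $v\in V(H^*)$, the roles of $H$ and $H^*$ are swapped, and the operation is an edge addition in $H$. I would check the degree condition $m,n\ge2$ in the vertex expansion against the condition $\deg v\ge4$ together with membership of $R'$ in $\cq$.

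Iterating the lemma gives polyhedra $H_0=W_4,H_1,\dots,H_k$ with $\{H_k,H_k^*\}=\{G,G^*\}$, where each $H_{i+1}$ arises from $H_i$ by an edge addition or a vertex expansion. Here I use that $W_4$ is self-dual, so either member of the pair $\calr^{-1}(A_8)$ is $W_4$. If $H_k\simeq G$ we are done. If $H_k\simeq G^*$, I dualise the entire sequence: $H_0^*=W_4$, and each edge addition $H_i\to H_{i+1}$ becomes a vertex expansion $H_i^*\to H_{i+1}^*$, and vice versa. This produces $G$ from $W_4$ using only edge additions and vertex expansions, which proves the first formulation.

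For the equivalent second formulation I would use the same duality: a vertex expansion of $H$ equals dual, then add an edge, then dual again. This converts any expansion sequence into one using only edge additions and dualisations. Conversely, in a sequence of edge additions and dualisations, whenever we are currently working in the dual we can rewrite the edge addition as a vertex expansion of the primal. At the end we take the dual if needed, and note again that $W_4^*=W_4$. Thus the two descriptions generate the same class.
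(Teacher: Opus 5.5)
Your proposal is correct and follows the paper's first proof, which deduces Theorem \ref{thm:2} from Theorem \ref{thm:1} through radial graphs. You also supply the details the paper leaves implicit: the local dictionary between $\cp_1$ and edge addition or vertex expansion, the exclusion of antibipyramids via uniqueness of the dual pair, and dualising the whole sequence using $W_4^*=W_4$. (The paper additionally gives a direct alternative via Tutte's theorem, reducing each $W(n,m)$ to $W_4$ by edge deletions and contractions.)
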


Theorem \ref{thm:2} will be proven in Section \ref{sec:pf}. It constitutes a strengthening of \cite[Theorem 6.1]{tutt61}.

\paragraph{Discussion.} We have managed to construct all elements of $\cq\setminus \{A_n,\ \text{even } n\neq 8\}$ from a single starting graph and a single graph transformation. This also allows us to construct all polyhedra, via their radial graphs. In the actual implementation, in terms of computational complexity and memory allocated, this does not constitute a major saving, since we are only avoiding the use of one graph at each other level. However, from a theoretical point of view, our findings greatly simplify the way we think about the iterative generation of these classes, following several decades of using the full class of pyramids/antibipyramids as starting graphs.

\section{Proofs}
\label{sec:pf}

\subsection{Proof of Theorem \ref{thm:1}}

\begin{proof}[Proof of Theorem \ref{thm:1}]
First, note the $\supset$ inclusion is trivially verified. Moreover, by \cite[Theorem 4]{brin05}, the class $\cq$ is generated from the $\{A_n, \text{ even }n\geq 6\}$ (se Figure \ref{fig:thm1a}) by applying $\cp_1$. In order to apply $\cp_1$ to $(A_n,\{u,v,w,x\})$, the vertex $v$ must be of degree at least $4$ in $A_n$, and $u,x,w$ consecutive in the cyclic order of neighbours around $v$. Hence $v$ is a vertex of degree $n/2\geq 4$. We will call $z$ the other vertex of $A_n$ of degree $n/2$. Up to isomorphism, the choice of $u,v,w,x$ is thus unique. The obtained graph is $B_{n+1}$, as in Figure \ref{fig:thm1b}. That is to say, for every even $n\geq 8$, the set of graphs generated from $A_n$ by applying $\cp_1$ once is simply $\{B_{n+1}\}$. Therefore, the present theorem will be proven if we show that
\[\{B_{n+1}, \text{ even }n\geq 8\}\subset\{\{A_8\};\{\cp_1\}\}.\]
\begin{figure}[ht]
	\centering
	\begin{subfigure}{0.32\textwidth}
		\centering
		\includegraphics[width=4cm]{a14.pdf}
		\caption{$A_{14}$.}
		\label{fig:thm1a}
	\end{subfigure}
	\begin{subfigure}{0.32\textwidth}
		\centering
		\includegraphics[width=4cm]{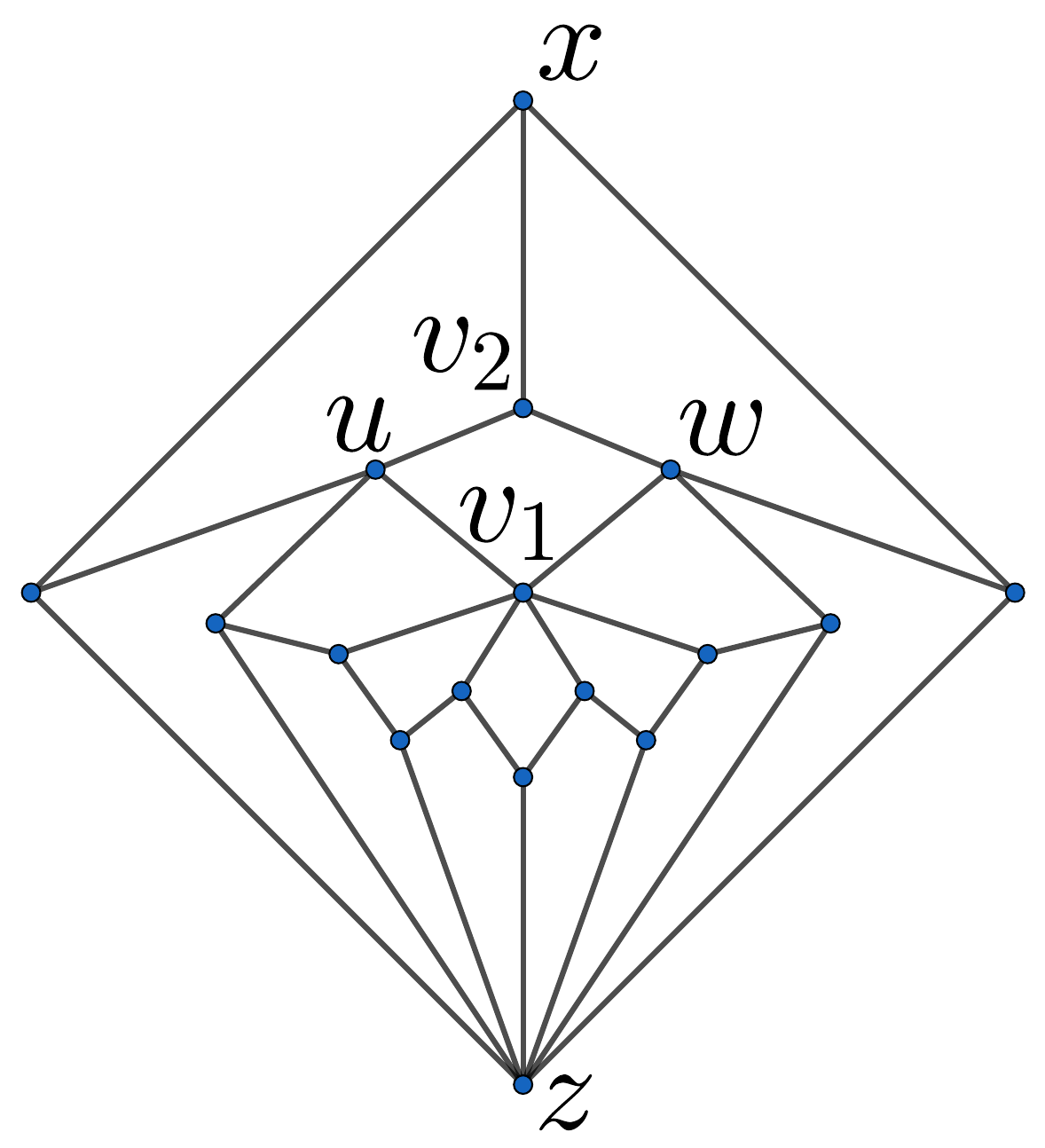}
		\caption{$B_{15}$.}
		\label{fig:thm1b}
	\end{subfigure}
	\begin{subfigure}{0.32\textwidth}
		\centering
		\includegraphics[width=3.5cm]{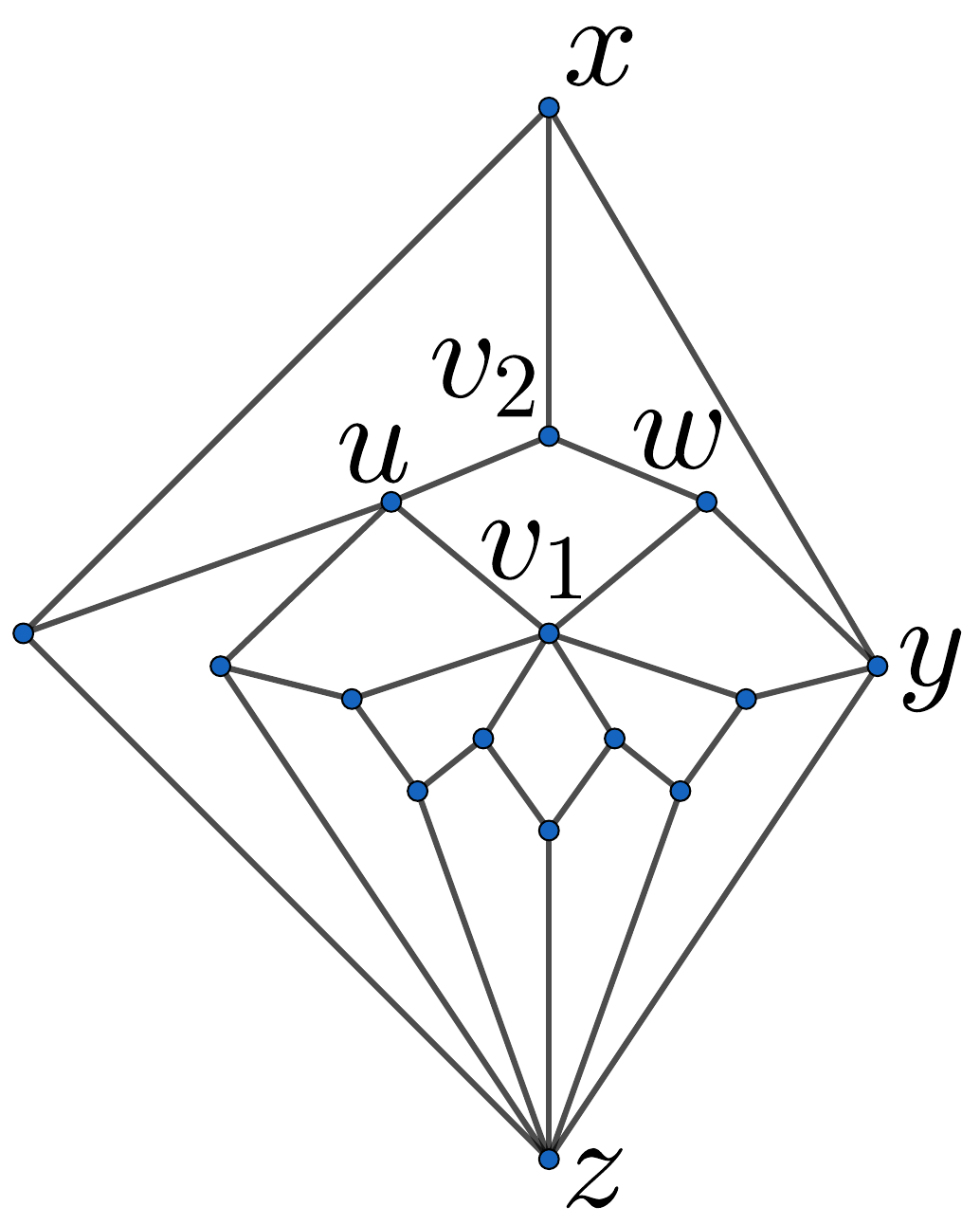}
		\caption{$C_{14}$.}
		\label{fig:thm1c}
	\end{subfigure}
	\caption{Proof of Theorem \ref{thm:1}.}
	\label{fig:thm1}
\end{figure}

To apply $\cp_1^{-1}$, we need a face with two diagonally opposite vertices of degree at least $4$ (in Figure \ref{fig:p1b}, they are $u,w$). In $B_{n+1}$, the only vertices of degree at least $4$ are: $z$, of degree $n/2$; $v_1$, of degree $n/2-1$; $u,w$, of degree $4$. Hence, up to isomorphism, there are only two faces that one may contract to apply $\cp_1^{-1}$ to $B_{n+1}$: $[u,v_1,w,v_2]$, that yields $A_n$, and the face containing $w,z$ (or equivalently up to isomorphism, $u,z$), that yields the graph $C_{n}$ illustrated in Figure \ref{fig:thm1c}.

Now in $C_{n}$, the vertices of degree at least $4$ are: $z,v_1$, of degree $n/2-1$; $u,y$, of degree $4$. Hence $\cp_1^{-1}$ is applicable to either the face containing $v_1,y$, or the one containing $u,z$. Either choice yields $B_{n-1}$.

We have proven that one may contract $B_{n+1}$ to $C_n$ and then to $B_{n-1}$, via two applications of $\cp_1^{-1}$. We continue in this fashion, obtaining successively
\[B_{n+1},C_n,B_{n-1},C_{n-2},\dots,B_9,C_8\simeq A_8.\]
\end{proof}

In the above proof, we have also shown that, to contract $B_{n+1}$, even $n\geq 8$, to an antibipyramid, the only way is given by the successive contractions
\[B_{n+1},C_n,B_{n-1},C_{n-2},\dots,B_{m+1},C_m\simeq A_m,\qquad \text{ even }m\geq 8.\]

\subsection{Further insight on Theorem \ref{thm:1}}

In the proof of Theorem \ref{thm:1}, another way to see that indeed at each step the $\cp_1$-contraction yields the sought $B_n$ or $C_n$, is to use degree sequences. For any graph, the degree sequence lists the degrees of its vertices in non-increasing order. For instance, the antibipyramid $A_{2n}$ has sequence
\begin{equation}
\label{eq:seq}
n,n,3^{2n},
\end{equation}
with the notation $3^{2n}$ indicating that the value $3$ is repeated $2n$ times. We say that $A_{2n}$ realises the degree sequence \eqref{eq:seq}.

To check that contracting $B_{n+1}$ yields $C_n$, and contracting $C_{n}$ yields $B_{n-1}$, firstly one proves (as we shall do next) that the $B_n$'s and $C_n$'s (with the exception of $C_{12}$) are {\bf unigraphic} in $\cq$, in the sense that each is the unique element of $\cq$ realising its degree sequence. Next, one combines this with the fact that, as we apply a $\cp_1$-contraction to $B_n$ or $C_n$, we know immediately the degree sequence of the resulting graph. For the theory of unigraphic polyhedra in general, see \cite{maffucci2025faces} and the references therein.

\begin{prop}
For every $n\geq 3$, $A_{2n}$ is the unique element of $\cq$ with degree sequence
\[n,n,3^{2n}.\]
For every $n\geq 4$, $B_{2n+1}$ is the unique element of $\cq$ with degree sequence
\[n,n-1,4,4,3^{2n-1}.\]
For $n=4$ and for every $n\geq 6$, $C_{2n+2}$ is the unique element of $\cq$ with degree sequence
\[n,n,4,4,3^{2n}.\]
Moreover, for every $n\geq 4$, $C_{2n+2}$ is the radial graph of the self-dual polyhedron $S(n,4)$ that appeared in \cite{maffucci2026self}.
\end{prop}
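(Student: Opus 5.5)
The plan is to work with the polyhedron pair rather than with the quadrangulation itself. By the characterisation recalled in the Introduction, $R\in\cq$ if and only if $R\simeq\calr(G)$ for a polyhedron $G$, unique up to duality. The bipartition classes of $R$ are $V(G)$ and $V(G^*)$. The degree of a vertex of $G$ in $R$ equals its degree in $G$, and the degree of a face-vertex equals the length of that face.

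The key identity is this: $R$ has $|E(R)|=2|V(R)|-4$, and $|E(R)|$ equals the sum of degrees over each colour class, which is $2|E(G)|$. So each colour class has degree sum $|E(G)|$. In each case the first step is to decide how the listed degrees split between the two colour classes.

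For $A_{2n}$, the total degree sum is $4n$, so $|E(G)|=2n$. Each class consists of degree-$3$ vertices plus some of the two vertices of degree $n$. If one class contained both large vertices, its sum would be at least $2n+3>2n$, which is impossible for $n\ge 3$. So each class holds exactly one vertex of degree $n$ and $n$ vertices of degree $3$. Then $G$ has $n+1$ vertices, one of them of degree $n$, so that vertex is adjacent to all others and the remaining vertices induce a Hamiltonian cycle (they have degree $3$, i.e. degree $2$ after removing the apex). Hence $G=W_n$, and $R=\calr(W_n)=A_{2n}$.

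For $B_{2n+1}$, the degree sum is $4n+4$, so $|E(G)|=2n+2$. Placing $n$ and $n-1$ in the same class gives a sum of at least $2n-1+3+\dots$. I will check by counting that the only feasible split is $\{n,4,3^{a}\}$ and $\{n-1,4,3^{b}\}$, or possibly $\{n,3^a\}$ and $\{n-1,4,4,3^b\}$; the sums fix $a$ and $b$. In each case $G$ has one vertex of degree close to $|V(G)|-1$. The vertex of degree $|V(G)|-1$ or $|V(G)|-2$ forces $G$ to be a wheel with one extra edge or one expanded spoke. I will enumerate these small possibilities, which are polyhedra obtained from $W_{n-1}$ or $W_n$ by a single edge addition or vertex expansion near the apex. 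I will compute their radial graphs and show that each equals $B_{2n+1}$ (possibly up to duality) or has the wrong degree sequence.

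For $C_{2n+2}$ the same argument applies with sum $4n+8$, so $|E(G)|=2n+4$. The splits are $\{n,4,3^{n}\}$ on both sides, so both $G$ and $G^*$ have degree sequence $n,4,3^{n}$ on $n+2$ vertices with $2n+4$ edges. The vertex of degree $n$ misses exactly one vertex, and I will classify the polyhedra with this degree sequence. The expected answer is a single graph, the self-dual $S(n,4)$, apart from a second sporadic realisation when $n=5$. This explains the excluded $n=5$, i.e. $C_{12}$.

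The main obstacle is this last classification: showing that for $n=4$ and $n\ge 6$ there is exactly one polyhedron with degree sequence $n,4,3^n$ whose dual has the same sequence. I will attempt it by deleting the high-degree apex $h$. The neighbours of $h$ form a path-like structure in $G-h$, and the non-neighbour $t$ has degree $3$ or $4$. I will case-split on the degree of $t$ and where the degree-$4$ vertex sits. Three-connectivity will rule out most placements.

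Finally, for the last claim I will verify directly from Figure \ref{fig:thm1c} that the colour classes of $C_{2n+2}$ give $S(n,4)$, using its description in \cite{maffucci2026self}.
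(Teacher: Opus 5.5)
There is a genuine gap in the step that distributes the degrees of $R$ between the two colour classes, and it is exactly where the exceptional case lives.

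\textbf{The counting identity.} It is off by a factor of two. Since $R$ is bipartite, each colour class has degree sum $|E(R)|=2|E(G)|$, not $|E(G)|$. So for $A_{2n}$, $B_{2n+1}$, $C_{2n+2}$ the class sums are $4n$, $4n+2$, $4n+4$, and $|E(G)|=2n,\,2n+1,\,2n+2$. Your value of $2n+4$ edges for $C_{2n+2}$ even contradicts the sequence $n,4,3^n$ you then assign to $G$. Both splits you propose for $B_{2n+1}$ fail the correct count. The only split separating $n$ from $n-1$ is $\{n,4,4,3^{n-2}\}\,|\,\{n-1,3^{n+1}\}$.

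\textbf{The overlooked split.} With the correct sums, counting does \emph{not} prevent both vertices of degree $n$ from lying in the same class. For $C_{2n+2}$ this requires $4b+3c=2n+4$ with $b\in\{0,1,2\}$, which is solvable for every $n$. For $A_{2n}$, the split $\{n,n,3^{2n/3}\}\,|\,\{3^{4n/3}\}$ passes whenever $3\mid n$, so your ``$2n+3>2n$'' exclusion is invalid. This case needs a structural argument. The paper's is as follows. After possibly swapping $G$ and $G^*$, $G$ would have two $n$-gonal faces, which share at most two vertices, so $|V(G)|\ge 2n-2$. But $3|V(G)|\le 2|E(G)|=4n+4$, which forces $n\le 5$. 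Only once the two $n$'s are separated does the congruence $n+8+3x=4n+4$ also separate the two $4$'s. That is what gives $\{n,4,3^n\}$ on both sides, which you simply asserted.

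\textbf{The exception at $n=5$.} Your explanation of why $n=5$ is excluded is also wrong. In the split $\{n,4,3^n\}\,|\,\{n,4,3^n\}$ the polyhedron is unique for $n=5$ as well. Your apex-deletion plan shows this, and it is a sound alternative to the paper's argument via the adjacent $n$-face and $4$-face:
\begin{itemize}
\item If the non-neighbour $t$ of the apex $h$ has degree $4$, then $G-h$ is two cycles glued at $t$, which is not $2$-connected.
\item If $t$ has degree $3$, then $G-h$ is a theta graph with branch vertices $t$ and the degree-$4$ vertex $s$. One of its paths must be the edge $st$, because every other vertex lies on the face containing $h$. The face lengths then force the other two paths to have $1$ and $n-2$ inner vertices.
\end{itemize}
The second realisation of $5,5,4,4,3^{10}$ comes from the split you overlooked. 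Insert a degree-$3$ vertex into one face of the triangular bipyramid; this gives a triangulation with degrees $5,5,4,4,3,3$. Its dual is a cubic polyhedron on $8$ vertices with faces $5,5,4,4,3,3$, whose radial graph has the same degree sequence as $C_{12}$.

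\textbf{A smaller point.} For $A_{2n}$, $G-h$ being $2$-regular only gives a union of cycles. You need the $2$-connectivity of $G-h$ to conclude it is a single cycle.
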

\begin{proof}
We will prove the statements for $C_{2n+2}$, the other proofs being of similar flavour. Let $R\in\cq$ have degree sequence
\[n,n,4,4,3^{2n}.\]
By the handshaking lemma,
\[|E(R)|=4n+4,\]
and since $R$ is a quadrangulation, we deduce that $R$ has $2n+2$ faces. For $n=4$, we note that, apart from the antibipyramid $A_{10}$, the graph $C_{10}$ is the only element of $\cq$ with $10$ faces, so that the statement of the present proposition is clearly true for $n=4$.

Henceforth, we thus take $n\geq 5$. We write $R=\calr(G)=\calr(G^*)$, where $(G,G^*)$ is a pair of dual polyhedra. 

Let $G$ have two faces of length $n$. Two distinct faces in a polyhedron may have at most two common vertices \cite[Section 1]{maffucci2025deza}, hence
\[|V(G)|\geq n+n-2.\]
We combine the above with $3|V(G)|\leq 2|E(G)|$, that holds by $3$-connectivity of $G$, and obtain

\[6n-6\le 3|V(G)|\leq 2|E(G)|=4n+4,\]

whence $n\leq 5$. We inspect the elements of $\cq$ with $12$ faces to conclude that exactly two of them have sequence $5,5,4,4,3^{10}$.

Henceforth, we take $n\geq 6$. As argued above, $G,G^*$ each have exactly one vertex of degree $n$ and one face of length $n$. Now suppose by contradition that $G$ has two vertices of degree $4$. It follows that $G$ has degree sequence
\[n,4,4,3^x\]
for some integer $x$ satisfying $0\leq x\leq 2n$. Since $R=\calr(G)=\calr(G^*)$ and $|E(R)|=4n+4$, one has
\[|E(G)|=|E(G^*)|=2n+2.\]
By the handshaking lemma for $G$,
\[n+4+4+3x=2(2n+2),\]
whence $x=n-4/3$, a contradiction.

We deduce that both $G,G^*$ have degree sequence
\[n,4,3^{n}.\]
The $n$-gonal face
\[\alpha_n=[u_1,u_2,\dots,u_n]\]
of $G$ cannot be adjacent only to triangular faces, else either $G$ would be the $n$-gonal pyramid, or $G$ would have at least three vertices of degree at least $4$. Hence in $G$, $\alpha_n$ and the quadrangular face $\alpha_4$ are adjacent. We may write
\[\alpha_4=[u_1,u_2,v,w].\]
Together, these two faces have $n+2$ distinct vertices, so that
\[V(G)=\{u_1,u_2,\dots,u_n,v,w\}.\]
Moreover, for $i\in\{4,n\}$, each vertex on $\alpha_i$ is adjacent to exactly two other vertices on $\alpha_i$, because $G$ is a polyhedron \cite[Section 1]{maffucci2025deza}. On the other hand, there exists in $G$ a vertex of degree $n$. It follows that this vertex is one of $v,w$, say $v$, and it is adjacent to all of
\[u_3,u_4,\dots,u_n.\]
Now $w$ has exactly one neighbour among $u_3,u_4,\dots,u_n$. By planarity, the only possibility is $wu_n\in E(G)$. For every $n\geq 6$, $G$ has been uniquely determined.

For every $n\geq 4$, we note that $C_{2n+2}=\calr(S(n,4))$, where $S(n,4)$ was defined in \cite{maffucci2026self}. It is the only self-dual polyhedron of degree sequence $n,4,3^{n}$ \cite[Theorem 1.3]{maffucci2026self}.
\end{proof}

\subsection{Proof of Theorem \ref{thm:2}}

\begin{proof}[Proof of Theorem \ref{thm:2}]
An equivalent way of generating the polyhedra from the pyramids is to generate their radials, i.e., the members of $\cq$, from the antibipyramids. The present theorem now follows from Theorem \ref{thm:1}.

An alternative way to prove Theorem \ref{thm:2} is the following. By \cite[Theorem 6.1]{tutt61}, every polyhedron may be constructed from a pyramid by adding edges and expanding vertices as in Figure \ref{fig:ex}. Since pyramids are self-dual, the first step in this procedure is always to add an edge. We write $u$ for the apex of the $n$-gonal pyramid $W_n$, and
\[a_1,a_2,\dots,a_n\]
for the base. Then the graph obtained from $W_n$ by adding the edge $a_1a_{m+2}$ may be denoted by
\[W(n,m), \qquad 1\leq m\leq n/2-1\]
(note that it suffices to consider $m\leq n/2-1$ to generate all possibilities up to isomorphism). The present theorem will thus be proven if we can generate each $W(n,m)$, $n\geq 4$, $1\leq m\leq n/2-1$ starting from $W_4$ via edge addition and vertex expansion.

Given $W(n,m)$, we delete $ua_1$. Then, we may contract: either the vertices $a_1,a_2$, obtaining a graph isomorphic to $W(n-1,m)$; or the vertices $a_1,a_n$, obtaining a graph isomorphic to $W(n-1,m-1)$. Proceeding iteratively via these two steps of deletion and contractions, we obtain successively
\[W(n,m),W(n-1,m-1),\dots,W(n-m+1,1),W(n-m,1),\dots,W(4,1).\]
Finally, $W_4$ is obtained from $W(4,1)$ by deleting an edge connecting two vertices of degree $4$.
\end{proof}

\paragraph{Acknowledgements.}
Riccardo W. Maffucci was partially supported by Programme for Young Researchers `Rita Levi Montalcini' PGR21DPCWZ \textit{Discrete and Probabilistic Methods in Mathematics with Applications}, awarded to Riccardo W. Maffucci.

\bibliographystyle{abbrv}
\bibliography{bibgra}
\end{document}